\theoremstyle{plain}
\newtheorem{theorem}{Theorem}[section]
\newtheorem{assumption}{Assumption}
\newtheorem{proposition}[theorem]{Proposition}
\theoremstyle{definition}
\theoremstyle{remark}
\newtheorem{remark}{Remark}
\def\tr{\mbox{tr}}
\def\min{\mbox{min}}
\def\max{\mbox{max}}
\def\rea{\mathbb{R}}
\def\cale{{\mathcal E}}
\def\begequarrs{\begin{eqnarray*}}
	\def\endequarrs{\end{eqnarray*}}
\def\begequarr{\begin{eqnarray}}
\def\endequarr{\end{eqnarray}}
\def\begarr{\begin{array}}
	\def\endarr{\end{array}}
\def\begequ{\begin{equation}}
\def\endequ{\end{equation}}
\def\lab{\label}
\def\begdes{\begin{description}}
	\def\enddes{\end{description}}
\def\begenu{\begin{enumerate}}
	\def\begite{\begin{itemize}}
		\def\endite{\end{itemize}}
	\def\endenu{\end{enumerate}}
\def\lef[{\left[\begin{array}}
	\def\rig]{\end{array}\right]}
\def\qed{\hfill$\Box \Box \Box$}
\def\begcen{\begin{center}}
	\def\endcen{\end{center}}
\def\begrem{\begin{remark}\rm}
	\def\endrem{\end{remark}}
\def\begcas{\begin{cases}}
	\def\endcas{\end{cases}}
\begin{document}

\articletype{}

\title{An Adaptive Passivity-Based Controller of a Buck-Boost Converter With a Constant Power Load}

\author{
\name{Wei He\textsuperscript{a}\thanks{CONTACT Shihua Li. Email: lsh@seu.edu.cn}, Romeo Ortega\textsuperscript{b}, Juan E. Machado\textsuperscript{b}, Shihua Li\textsuperscript{a}}
\affil{\textsuperscript{a}Key Laboratory of Measurement and Control of Complex Systems of Engineering,
Ministry of Education, School of Automation, Southeast University, 210096 Nanjing, China; \textsuperscript{b}Laboratoire des Signaux et Systemes, Supelec, Plateau du Moulon, 91192 Gif-sur-Yvette, France.}
}

\maketitle

\begin{abstract}
\noindent This paper addresses the problem of regulating the output voltage of a DC-DC buck-boost converter feeding a constant power load, which is a problem of current practical interest. Designing a stabilising controller is theoretically challenging because its average model is a bilinear second order system that, due to the presence of the constant power load, is non-minimum phase with respect to both states.  Moreover, to design a high-performance controller, the knowledge of the extracted load power, which is difficult to measure in industrial applications, is required. In this paper, an adaptive interconnection and damping assignment passivity-based control---that incorporates the immersion and invariance parameter estimator for the load power---is proposed to solve the problem. Some detailed simulations are provided to validate the transient behaviour of the proposed controller and compare it with the performance of a classical PD scheme.
\end{abstract}

\begin{keywords}
Buck-boost converter; constant power load; interconnection and damping assignment passivity based control; immersion and invariance
\end{keywords}

\section{Introduction}
The DC-DC buck-boost power converter is increasingly utilized in power distribution systems since it can step up or down the voltage between the source and load, providing flexibility in choosing the voltage rating of the DC source. Although the control of these converters in the face of classical loads is well-understood, in some modern applications the loads  do not behave like standard passive impedances, instead they are more accurately represented as constant power loads (CPLs), which correspond to first-third quadrant hyperbolas in the loads voltage-current plane. This scenario significantly differs from the classical one and poses a new challenge to control theorist, see \citep{Bar16,Ema06,Kha08,Mar12} for further discussion on the topic and \citep{Sin17} for a recent review of the literature. It should be underscored that the typical application of this device requires large variations of the operating point---therefore, the dynamic description of its behavior cannot be captured by a linearized model, requiring instead a nonlinear one.

To the best of the authors' knowledge no controller, with guaranteed stability properties for the nonlinear model, has been proposed for the voltage regulation of the buck-boost converter with a CPL---hence, its solution remains an open problem. Several techniques to address this problem, but without a nonlinear stability analysis, have been reported in the power electronics literature. In \citep{Rah09}, the active-damping approach is utilized to address the negative impedance instability problem raised by the CPL. The main idea of this method is that a virtual resistance is considered in the original circuit to increase the system damping. However, the stability result is obtained by applying small-signal analysis, which is valid only in a small neighbourhood of the operating point. A new nonlinear feedback controller, which is called ``Loop Cancellation", has been proposed to stabilize the buck-boost converter by ``cancelling the destabilizing behaviour caused by CPL" \citep{Rah10}. The control problem turns into the design of a controller for the linear system by using loop cancellation method. However, the construction is based on feedback linearization \citep{ISI95} that, as is well-known, is highly non-robust. A sliding mode controller is designed in \citep{Sin16} for this problem. However, for the considered nonlinear system, the stability result is obtained by adopting the linear system theory. In addition, as it is widely acknowledged, the drawbacks of this method are that the proposed control law suffers from chattering and its relay action injects a high switching gain. The deleterious effect of these factors is clearly illustrated in experiments shown in \citep{Sin16}, which exhibit a very poor performance.

Aware of the need to deal with the intrinsic nonlinearities of power converters some authors of this community have applied passivity-based controllers (PBCs), which is a natural candidate in these applications. Unfortunately, in many of these reports the theoretical requirements of the PBC methodology are not rigorously respected. In \citep{Wei} a review of some of the---alas, incorrect or incomplete---results on application of PBC for power converters is given. For instance, in \citep{Kwa07}, the well-known standard  PBC \citep{Ort98} is used for the buck-boost with a CPL. Unfortunately, the given result is theoretically incorrect due to the fact that the authors fail to validate the stability of the zero dynamics of the system with respect to the controlled output that, as explained in \citep{Ort98}, is an essential step for the stability analysis and, as shown in this paper, it turns out to be violated.

An additional drawback of the existing results is that all of them require the knowledge of the power extracted by the CPL, which is difficult to measure in industrial applications. Designing an estimator for the power is a hard task because the original system is nonlinear and the only available measurements are inductor current and output voltage.

In this paper, we apply the well-known interconnection and damping assignment (IDA) PBC, first reported in \citep{Ort02} and reviewed in \citep{Ort04}, to stabilize the buck-boost converter with a CPL. The main contributions of this note are:
\begite
\item[1)] Derivation of an IDA-PBC that ensures the desired operating point is a (locally) asymptotically stable equilibrium with a guaranteed domain of attraction.
\item[2)]  Design of an estimator of load power, which is based on the immersion and invariance (I$\&$I) technique \citep{Ast08} and has guaranteed stability properties, to make the IDA-PBC adaptive.
\item[3)] Proof that the zero dynamics of the system, with respect to both states, is unstable---limiting the achievable performance of classical PD controllers to ``low-gain tunings".
\endite

The remaining of the paper is organized as follows. Section \ref{section2} contains the model of the system  and the analysis of its zero dynamics with respect to the two states. Moreover, a remark on the  result reported in \citep{Kwa07} is given. Section \ref{section3} proposes the IDA-PBC assuming the power is known. To make the latter scheme adaptive in Section \ref{section4} we design a on-line power estimator, while some simulations carried out by MATLAB are provided in Section \ref{section5}. This paper is wrapped-up with some concluding remarks in Section \ref{section6}. To enhance readability, the derivation of the IDA-PBC that is conceptually simple but computationally involved, is given in an appendix at the end of the paper.

\section{System Model, Problem Formation and Zero Dynamics Analysis}
\label{section2}
In this section, the average model of buck-boost converter feeding a CPL, its zero dynamics analysis and a remark on the existing result on \citep{Kwa07} are given.
 \subsection{Model of buck-boost converter with CPL}
\label{subsec21}

The topology of buck-boost converter feeding a CPL, is shown in Fig. \ref{buckboostcircuit}. Under the standard assumption that it operates in continuous conduction mode, the average model is given by
\begin{align}
\nonumber
L {di \over dt}&= -(1-u)v + uE, \\
C {dv \over dt}&= (1-u)i-\frac{P}{v},
\lab{sys}
\end{align}
where $i\in \mathbb{R}_{>0}$ is the inductor current, $v\in \mathbb{R}_{>0}$ the output voltage, $P\in \mathbb{R}_{>0}$ the power extracted by the CPL, $E\in \mathbb{R}_{>0}$ is the input voltage and $u\in [0,1]$ is the duty ratio, which is the control signal.

\begin{figure}[H]
  \centering\includegraphics[scale=1]{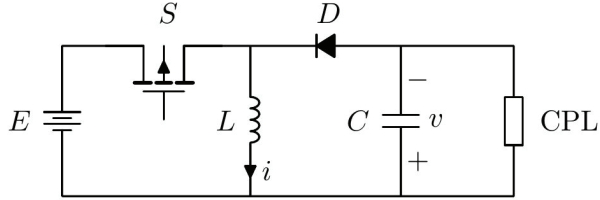}
  \caption{\normalsize Circuit representation of the DC-DC buck-boost converter with a CPL}\label{buckboostcircuit}
\end{figure}
Some simple calculations show that the assignable equilibrium set is given by
\begin{align}
\label{set}
\mathcal{E}:=\left\{ (i,v) \in \mathbb{R}^2_{>0}\;|\; i- P\left(\frac{1}{v}+ \frac{1}{E}\right)=0  \right\}.
\end{align}

\subsection{Control problem formulation}
\label{subsec22}
Consider the system \eqref{sys} verifying the following conditions.
\begin{assumption}\em
\lab{ass1}
The power load $P$ is {\em unknown} but the parameters $L,C$ and $E$ are known.
\end{assumption}

\begin{assumption}\em
\lab{ass2}
The state $(i,v)$ is measurable.
\end{assumption}

Fix a {\em desired output voltage} $v_\star \in \rea_{>0}$ and compute the associated assignable equilibrium point $(i_\star,v_\star) \in \cale$. Design a static state-feedback control law with the following features.
\begite
\item[(F1)]  $(i_\star,v_\star)$  is an asymptotically stable equilibrium of the closed-loop with a well-defined domain of attraction.
\item[(F2)] It is possible to define a set $\Omega \subset \rea^2_{>0}$ which is {\em invariant} and inside the domain of attraction of the equilibrium. That is, a set inside the positive orthant verifying
\begequarrs
&&\left[(i(0),v(0)) \in \Omega  \;\Rightarrow\;  (i(t),v(t)) \in \Omega,\forall t\geq 0 \right]\\
&& \lim_{t \to \infty}(i(t),v(t))  =  (i_\star,v_\star).
\endequarrs
\endite

To simplify the notation, and without loss of generality, in the sequel we consider the normalized model of the system, which is obtained using the change of coordinates
\begequarr
\nonumber
x_1 &:= &\frac{1}{E}\sqrt{\frac{L}{C}}i \\
x_2 & := & \frac{1}{E}v,
\lab{chacoo}
\endequarr
and doing the {\em time scale} change  $\tau=\frac{t}{\sqrt{LC}}$ that yields the model
\begin{align}
\dot x_1&=-(1-u)x_2+u
\nonumber \\
\dot x_2&=(1-u)x_1-\frac{D}{x_2}
\label{bucboo}
\end{align}
where
$$
D:=\frac{P}{E^2}\sqrt{\frac{L}{C}},
$$
$\dot{(\cdot)}$ denotes ${d \over d\tau}(\cdot)$ and all signals are expressed in the new time scale $\tau$. The assignable equilibrium set $\cale$ in the coordinates $x$ is given by
\begin{align}\label{set}
\mathcal{E}_x:=\left\{ x \in \mathbb{R}^2_{>0}\;|\; x_1-\frac{D}{x_2}-D=0\right\}
\end{align}

Notice that, under Assumptions \ref{ass1} and \ref{ass2}, the control problem is translated into the design of a state feedback for the system \eqref{bucboo}, with unknown $D$, such that a given $x_\star \in \cale_x$ is asymptotically stable.

It is important to recall that the signal of interest is the output voltage $v$, therefore, for the fixed $x_{2\star} \in \rea_{>0}$, the $x_{1\star} \in \rea_{>0}$ is defined via
\begequ
\lab{x1sta}
x_{1\star}=\frac{D}{x_{2\star}}+D.
\endequ

\subsection{Stability analysis of the systems zero dynamics}
\label{subsec23}
The design of a stabilising controller for \eqref{bucboo} is complicated by the fact that, as shown in the proposition below, its zero dynamics with respect to both states is {\em unstable}. This means that, if the controller injects high gain, the closed-loop system will be {\em unstable}---as it stems from the fact that the poles will move towards the unstable zeros.  This situation hampers the design of high performance PD controllers, which require high proportional gains to speed up the transients. See Section \ref{section5} for an illustration of this fact.
\begin{proposition}\label{lemma1} \em
Consider the system \eqref{bucboo} and an assignable equilibrium $x_\star \in \cale_x$. The zero dynamics with respect to the outputs  $x_1-x_{1\star}$ or $x_2-x_{2\star}$ are unstable.
\end{proposition}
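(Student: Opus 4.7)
The plan is to compute the zero dynamics explicitly for each of the two candidate outputs and then check that in both cases the reduced scalar system has a positive derivative at the equilibrium, which implies instability by Lyapunov's first method.

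For the output $y_1 = x_1 - x_{1\star}$, I would set $x_1 \equiv x_{1\star}$ and $\dot x_1 \equiv 0$ in the first equation of \eqref{bucboo}, solve algebraically for the duty ratio as $u = x_2/(1+x_2)$ (well-defined on the positive orthant), and substitute $1-u = 1/(1+x_2)$ into the second equation to obtain the scalar zero dynamics
\begin{equation*}
\dot x_2 = \frac{x_{1\star}}{1+x_2} - \frac{D}{x_2}.
\end{equation*}
Using the equilibrium identity \eqref{x1sta}, i.e., $x_{1\star}=D(1+x_{2\star})/x_{2\star}$, I would verify that $x_2 = x_{2\star}$ is indeed a fixed point, and then compute the Jacobian at that point, which simplifies to $D/[x_{2\star}^2(1+x_{2\star})] > 0$; hence the equilibrium of the zero dynamics is unstable.

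For the output $y_2 = x_2 - x_{2\star}$, I would symmetrically set $x_2 \equiv x_{2\star}$ and $\dot x_2 \equiv 0$ in the second equation of \eqref{bucboo}, solve for $1-u = D/(x_1 x_{2\star})$, and substitute into the first equation, which collapses (using $x_{1\star}=D(1+x_{2\star})/x_{2\star}$) to the one-dimensional system
\begin{equation*}
\dot x_1 = 1 - \frac{x_{1\star}}{x_1}.
\end{equation*}
The linearization at $x_1 = x_{1\star}$ equals $1/x_{1\star}>0$, giving again a positive eigenvalue and therefore an unstable equilibrium.

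I do not anticipate a serious obstacle: both reductions are algebraically direct because the control $u$ appears affinely in each equation, and the positivity of the resulting slope follows cleanly from the assignable-equilibrium relation \eqref{x1sta} and the standing positivity assumptions $D,x_{1\star},x_{2\star}>0$. The only minor point to be careful about is checking that the algebraic solutions for $u$ stay in $[0,1]$ on a neighbourhood of $x_\star$ (so that the zero dynamics are genuinely admissible), which is immediate from $x_{2\star}>0$ and $x_{1\star}>D/x_{2\star}$ respectively.
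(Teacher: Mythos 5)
Your proof is correct and follows essentially the same route as the paper: constrain the relevant state, solve the corresponding equation of \eqref{bucboo} for $u$, substitute to obtain a scalar zero dynamics, and conclude instability from the positive sign of its linearization at the equilibrium. One small remark: for the output $x_2-x_{2\star}$ your reduced dynamics $\dot x_1 = 1 - x_{1\star}/x_1$ is actually the correct expression (the paper's printed $w(x_1)=1-(x_{1\star}-D)/x_1$ contains an algebra slip and does not even vanish at $x_1=x_{1\star}$), but both versions yield a positive slope and hence the same instability conclusion.
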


\begin{proof}
Fixing  $x_1=x_{1\star}$ and using the first equation in \eqref{bucboo} we get
\begin{align*}
u=\frac{x_2}{x_2+1},
\end{align*}
which substituted in the second equation of \eqref{bucboo} yields the zero dynamics
\begin{align}
\lab{zerdynx1}
\dot x_2=\frac{D}{x_{2\star}x_2(x_2+1)}(x_{2}-x_{2\star})=:s(x_2).
\end{align}
The slope of $s(x_2)$ evaluated at  $x_2=x_{2\star}$ gives
$$
s'(x_2)|_{ x_2=x_{2\star}}= \frac{D}{x_{2\star}^2(1+x_{2\star})}.
$$
Since $x_{2\star}>0$, this is a positive number proving that the equilibrium $x_{2\star}$ of the dynamics \eqref{zerdynx1} is unstable---as claimed by the proposition.

Now, fixing $x_2=x_{2\star}$ and using the second equation of \eqref{bucboo} we get
\begin{align}
u=1-\frac{D}{x_1x_{2\star}},
\end{align}
which substituted in the first equation of \eqref{bucboo} yields
\begin{align}
\dot x_1 = 1- \frac{x_{1\star}-D}{x_1}=:w(x_1).
\end{align}
Proceeding as done for the case above we get
$$
w'(x_1)|_{ x_1=x_{1\star}}= \frac{x_{1\star}-D}{x^2_{1\star}}.
$$
The proof is completed noting from \eqref{set} that $x_{1\star}=D(1+\frac{1}{x_{2\star}})>D$ and the slope is, again, positive.
\end{proof}
The proof that the zero dynamic of \eqref{bucboo} with respect to $x_1 - x_{1\star}$ is unstable invalidates the stability claim made in Section \uppercase\expandafter{\romannumeral4} of \citep{Kwa07}. In this paper, a standard PBC is designed fixing $x_1=x_{1\star}$. It is well-known \citep{Ort98} that this kind of controller implements an inversion of the systems zero dynamics, therefore the controller will be unstable if the zero dynamics is unstable, which is the case of the PBC of \citep{Kwa07}. The interested reader is referred to \citep{Wei} for further details on this problem and a discussion of a similar---unfortunate---situation in other papers of PBC for power converters reported in the literature.
\section{Interconnection and Damping Assignment Passivity-based Controller}
\label{section3}
In this section, the IDA-PBC approach is proposed to stabilize the buck-boost converter feeding a CPL by assuming the power $D$ is known. This condition is later relaxed in Proposition \ref{proposition2} where an estimator of $D$ is added to the IDA-PBC.

To make the paper self-contained we present below the main result of the IDA-PBC methodology and give its proof. For more details on IDA-PBC  we refer the reader to \citep{Ort04}.
\begin{proposition}
\label{proposition0}\em
Consider the nonlinear system
\begin{align}
\lab{dotxfg}
\dot x=f(x)+g(x)u
\end{align}
with state $x \in \rea^n$ and control $u \in \rea^m$ and a desired operating point
$$
x_\star \in \{x \in \rea^n\;|\; g^\perp(x)f(x)=0\},
$$
where $g^\perp(x)$ is a full rank left annihilator of $g(x)$. Fix the target dynamics as
\begequ
\lab{tardyn}
\dot x = F_d(x) \nabla H_d(x),
\endequ
where $\nabla H_d(x):=\left({\partial H_d(x) \over \partial x}\right)^\top $ with the function $H_d(x)$ a solution of the PDE
\begequ
\label{pde}
  g^\perp(x)\left[f(x)-F_d(x)\nabla H_d(x)\right]=0,
\endequ
verifying
\begequ
\lab{mincon}
x_\star = \arg \min \{H_d(x)\},
\endequ
and the matrix $F_d(x)$ is such that
\begin{align}
\label{fd}
F^\top _d(x)+F_d(x)< 0.
\end{align}

The system \eqref{dotxfg} in closed-loop with
\begequ
\lab{uida}
u = \bar u(x):= [g^\top (x)g(x)]^{-1}g^\top (x)[ F_d(x) \nabla H_d(x)-f(x)]
\endequ
has an asymptotically stable equilibrium at $x_\star$ with {\em strict Lyapunov function} $H_d(x)$
\end{proposition}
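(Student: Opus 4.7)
The plan is to prove the proposition in two stages: first, show that the closed-loop system is exactly the target dynamics \eqref{tardyn}, and then use $H_d$ as a strict Lyapunov function, leveraging the minimum condition \eqref{mincon} and the negative-definiteness \eqref{fd}.

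For the first stage, I would substitute $u=\bar u(x)$ from \eqref{uida} into \eqref{dotxfg} to get
\begin{equation*}
\dot x \;=\; f(x)+g(x)[g^\top(x)g(x)]^{-1}g^\top(x)\bigl[F_d(x)\nabla H_d(x)-f(x)\bigr].
\end{equation*}
The operator $\Pi(x):=g(x)[g^\top(x)g(x)]^{-1}g^\top(x)$ is the orthogonal projector onto the range of $g(x)$, so $I-\Pi(x)$ projects onto $\ker g^\top(x)$, which is spanned by the rows of $g^\perp(x)$. Consequently, $(I-\Pi(x))v=0$ holds for any vector $v$ satisfying $g^\perp(x)v=0$. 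The matching PDE \eqref{pde} is precisely the statement that $F_d(x)\nabla H_d(x)-f(x)$ lies in the range of $g(x)$, which lets me rewrite the closed-loop equation as $\dot x=F_d(x)\nabla H_d(x)$, recovering \eqref{tardyn} exactly. (Equivalently, apply $g^\perp(x)$ to both sides of $\dot x=f+g\bar u$ and use \eqref{pde}, then note the component along $\mathrm{range}\,g(x)$ is matched by construction of $\bar u$.)

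For the second stage, since $x_\star$ is a (local) minimizer of $H_d$ by \eqref{mincon}, one has $\nabla H_d(x_\star)=0$, so $x_\star$ is an equilibrium of the closed-loop. Shifting $H_d$ by the constant $H_d(x_\star)$, the function is positive on a neighbourhood of $x_\star$ and vanishes only at $x_\star$ (locally). Differentiating along trajectories,
\begin{equation*}
\dot H_d \;=\; \nabla H_d(x)^\top \dot x \;=\; \nabla H_d(x)^\top F_d(x)\nabla H_d(x) \;=\; \tfrac{1}{2}\nabla H_d(x)^\top\bigl[F_d(x)+F_d^\top(x)\bigr]\nabla H_d(x),
\end{equation*}
which is strictly negative for $\nabla H_d(x)\neq 0$ by \eqref{fd}. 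Since $x_\star$ is a strict local minimum, $\nabla H_d(x)\neq 0$ for $x\neq x_\star$ in some neighbourhood, so $\dot H_d<0$ off the equilibrium, making $H_d$ a strict Lyapunov function and yielding asymptotic stability by the standard Lyapunov theorem.

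The only real subtlety—which I would flag but not belabour—is the passage from the PDE \eqref{pde} to the identity $\dot x=F_d(x)\nabla H_d(x)$ in closed loop. One must recognise that \eqref{pde} is equivalent to the range condition $F_d(x)\nabla H_d(x)-f(x)\in\mathrm{range}\,g(x)$, so that the least-squares-type formula \eqref{uida} is, in fact, an exact solution of $g(x)u=F_d(x)\nabla H_d(x)-f(x)$; the projector identity above is the cleanest way to make this rigorous. Everything else is a routine Lyapunov argument, with positive-definiteness of $H_d$ near $x_\star$ coming from \eqref{mincon} and negative-definiteness of $\dot H_d$ coming directly from \eqref{fd}.
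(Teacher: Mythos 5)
Your proof is correct and follows essentially the same route as the paper's: establish that the matching PDE \eqref{pde} together with \eqref{uida} forces the closed loop to coincide exactly with the target dynamics \eqref{tardyn}, then run the standard Lyapunov argument using \eqref{mincon} for positive definiteness and \eqref{fd} for strict decrease. The only difference is cosmetic---you phrase the first step via the orthogonal projector $g(g^\top g)^{-1}g^\top$ and the range condition on $F_d\nabla H_d-f$, whereas the paper invokes the invertibility of the stacked matrix formed from $g^\top$ and $g^\perp$; these are the same decomposition of $\rea^n$ into $\mathrm{range}\,g$ and its complement.
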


\begin{proof}
From the fact that the $n \times n$ matrix $\lef[{c} g^\top (x) \\ g^\perp(x)\rig]$ is full rank we have the following equivalence
$$
f(x)+g(x)\bar u(x)=F_d(x)\nabla H_d(x)\;\Leftrightarrow\;\eqref{pde},\;\eqref{uida}.
$$
Hence, the closed-loop is given by \eqref{tardyn}. Now, \eqref{mincon} ensures $H_d(x)$ is positive definite (with respect to $x_\star$). Computing the derivative of $H_d(x)$ along the trajectories of \eqref{tardyn} and invoking \eqref{fd} we get
$$
\dot H_d = (\nabla H_d(x))^\top F_d(x) \nabla H_d(x) < 0,\; \forall x \neq x_\star
$$
and, therefore, $H_d(x)$ is a {strict Lyapunov function} for the closed-loop system completing the proof.
\end{proof}

The proposition below is a direct application of IDA-PBC that provides a solution to our problem.
\begin{proposition}
\label{proposition1}\em
Consider the average model of the DC-DC buck-boost converter with a CPL \eqref{bucboo} with $D$ {\em known} and satisfying Assumption \ref{ass2}. Fix  $x_{2\star} \in \rea_{>0}$ and  compute $x_{1\star} \in \rea_{>0}$ via \eqref{x1sta}. The IDA-PBC
\begin{align}
\lab{ubucboo}
u =& \bar u(x,D,k_1):=\frac{1}{x_1^2+(x_2+1)^2} \Bigg(x_2(x_2+1)+x_1(x_1-\frac{D}{x_2})-\Big(\frac{x_2(x_2+1)}{x_1}+\frac{2x_1x_2}{x_2+1}\Big)\nonumber\\
&\Big(k_1x_1\big(2(k_2+x_1^2)+x_2^2-\frac{D(1+x_2)}{2x_1^2+x_2^2}\big)+\frac{\sqrt{2} D x_{1} \arctan{\frac{x_{1}} {\sqrt{x_{1}^2+\frac{x_{2}^2}{2}}}}}{(2x_{1}^2+x_{2}^2)^{\frac{3}{2}}}\Big)+\frac{1}{2x_2(2x_1^2+x_2^2)^{\frac{3}{2}}}\nonumber\\
&\Big(\frac{2x_1^2}{(x_2+1)^2}-2x_2\Big)\Big(\sqrt{2x_1^2+x_2^2}\big(2Dx_1(1+x_2)
+x_2(2x_1^2+x_2^2)(-1+2k_1x_2(k_2+x_1^2)\nonumber\\
&+k_1x_2^3)\big)+\sqrt{2}Dx_2^2\arctan{\frac{x_1}{\sqrt{x_1^2+\frac{x_2^2}{2}}}}\Big)\Bigg),
\end{align}
where  $k_1$ is a tuning gain satisfying
\begequ
\lab{bouk1}
k_1  > \max \{k_1', k_1''\},
\endequ
where the constants $k_1',k_1''$ are defined  in Appendix A in \eqref{kpri} and \eqref{kbipri}, respectively,  and the constant $k_2$ is defined as\footnote{Although $x_{1\star}$ is defined via \eqref{x1sta}, to simplify the notation, we have omitted this clarification in the definition of $k_2$.}
\begequ
\lab{k2}
k_2:={1 \over k_1}\left[\frac{D(1+x_{2\star})}{2x_{1\star}(2x_{1\star}^2+x_{2\star}^2)}-\frac{\sqrt{2} D x_{1\star} \arctan{\frac{x_{1\star}} {\sqrt{x_{1\star}^2
+\frac{x_{2\star}^2}{2}}}}}{2x_{1\star}(2x_{1\star}^2+x_{2\star}^2)^{\frac{3}{2}}}\right]-\frac{x_{2\star}^2}{2}-x_{1\star}^2,
\endequ
ensures the following.

\begdes
  \item[P1:] $x_\star$ is an asymptotically stable equilibrium of the closed-loop with Lyapunov function
\begequ
\label{solution} \hspace{-1cm}
H_d(x)=-\frac{1}{2}\left(x_2+\sqrt{2}D\arctan{\left[\frac{\sqrt{2}x_1}{x_2}\right]}\right)
-\frac{D\arctan{\left[\frac{x_1}{\sqrt{x_1^2+\frac{x_2^2}{2}}}\right]}}{2\sqrt{x_1^2+\frac{x_2^2}{2}}}
+\frac{k_1}{2}(x_1^2+\frac{x_2^2}{2}+k_2)^2.
\endequ
\enddes
\begdes
\item[P2:]  There exists a positive constant $c$ such that the sublevel set of the function $H_d(x)$
 \begin{align}\label{region}
  \Omega_x:=\{x \in \mathbb{R}_{>0}^{2}\;|\; H_d(x)\leq c\},
  \end{align}
is an estimate of the domain of attraction ensuring the state trajectories {\em remain} in $\mathbb{R}_{>0}^{2}$. That is, for all $x(0)\subset \Omega_x$, we have $x(t) \subset \Omega_x,\forall t \geq 0,$ and $\lim_{t \to \infty}x(t)=x_\star$.
\enddes
\qed
\end{proposition}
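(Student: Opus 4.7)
The plan is to obtain the result as a direct application of Proposition \ref{proposition0} to the normalized model \eqref{bucboo}. First I would write the system in the form \eqref{dotxfg} with
\[
f(x) = \begin{pmatrix} -x_2 \\ x_1 - D/x_2 \end{pmatrix}, \qquad g(x) = \begin{pmatrix} x_2+1 \\ -x_1 \end{pmatrix},
\]
and pick the full-rank left annihilator $g^\perp(x) = [\, x_1 \ \ x_2+1\,]$, which can be verified by direct computation. A short calculation shows $g^\perp(x) f(x) = x_1 - (x_2+1)D/x_2$, so the assignable equilibrium set agrees with \eqref{set} and in particular contains the desired $x_\star$ built via \eqref{x1sta}.

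Next I would propose an interconnection--damping matrix $F_d(x)$ of the form $J_d(x) - R_d(x)$, with $J_d$ skew-symmetric and $R_d$ positive definite so that condition \eqref{fd} holds by construction, and with the free entries of $J_d$ and $R_d$ chosen so that the scalar matching PDE
\[
g^\perp(x)\, F_d(x)\nabla H_d(x) = x_1 - (x_2+1)\frac{D}{x_2}
\]
reduces to an integrable first-order equation. Integrating this PDE explicitly (this is the ``computationally involved'' part that the authors delegate to Appendix A) is expected to yield a candidate storage function of the form \eqref{solution}, where the arctangent terms arise from integrating rational expressions of the type $1/(2x_1^2 + x_2^2)$, and where the quadratic term $\tfrac{k_1}{2}(x_1^2 + x_2^2/2 + k_2)^2$ enters as the homogeneous solution parametrised by the free constants $k_1, k_2$.

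I would then fix $k_2$ by imposing the first-order optimality condition $\nabla H_d(x_\star) = 0$, which gives exactly \eqref{k2}. Establishing the minimum condition \eqref{mincon}, i.e.\ positive definiteness of $\nabla^2 H_d(x_\star)$ (and more importantly of $H_d$ on a neighbourhood large enough to contain the target sublevel set), is where I expect the real work to lie, because the arctangent contributions make the Hessian sign indefinite without a sufficiently large quadratic correction. Bounding these transcendental terms on the region of interest produces two lower bounds on the gain, which are precisely the constants $k_1', k_1''$ defined in the appendix; thus \eqref{bouk1} is exactly what is needed to guarantee that $x_\star$ is a strict local minimum of $H_d$. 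Once this is in place, applying \eqref{uida} and performing the (long but routine) algebraic simplification yields the explicit control law \eqref{ubucboo}, and Proposition \ref{proposition0} delivers P1 with $H_d$ as strict Lyapunov function.

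Finally, for P2 I would exploit the structure of $H_d$: the arctangent terms are uniformly bounded, while the quartic contribution $\tfrac{k_1}{2}(x_1^2 + x_2^2/2 + k_2)^2$ dominates at infinity, so sublevel sets $\{H_d \le c\}$ are compact. The function $H_d$ is continuously defined only on $\mathbb{R}^2_{>0}$, so by choosing $c>0$ small enough that the connected component of the sublevel set containing $x_\star$ stays strictly inside the open positive orthant (thereby avoiding the singularity at $x_2 = 0$), this component is a compact, positively invariant set on which $\dot H_d < 0$ away from $x_\star$. Standard Lyapunov/invariance arguments then give forward invariance of $\Omega_x$ and convergence of every trajectory starting in $\Omega_x$ to $x_\star$, which is precisely P2. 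The main obstacle remains the PDE integration together with the Hessian sign analysis that yields $k_1', k_1''$; everything else is bookkeeping.
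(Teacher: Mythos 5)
Your proposal follows essentially the same route as the paper's Appendix A: the same $f$, $g$ and annihilator $g^\perp=[x_1\;\;x_2+1]$, the matching PDE solved up to a free function of $x_1^2+\tfrac{x_2^2}{2}$ taken quadratic with gains $k_1,k_2$, the constant $k_2$ fixed by the first-order condition $\nabla H_d(x_\star)=0$, the thresholds $k_1',k_1''$ obtained from positivity of the Hessian (its $(1,1)$ entry and its determinant) at $x_\star$, and P2 from boundedness of sufficiently small sublevel sets strictly inside $\mathbb{R}^2_{>0}$. The only minor divergence is one of emphasis: the paper commits to a specific $F_d$ up front and checks Hessian positivity only at $x_\star$ (asserting convexity somewhat loosely), whereas you phrase the gain bounds as coming from bounding the arctangent terms over a region and correctly insist on taking the connected component of the sublevel set containing $x_\star$ --- a point the paper glosses over but which does not change the argument.
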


The control \eqref{ubucboo} is, obviously, extremely complicated for a practical implementation. This can be carried out doing an approximation, {\em e.g.}, via polynomials or rational functions, of this function for which standard symbolic software is readily available.

\section{Adaptive IDA-PBC Using an Immersion and Invariance Power Estimator}
\label{section4}

In this section, the case of unknown power $D$ is considered and an estimator of this parameter, based on the  I$\&$I technique \citep{Ast08}, is presented.

\begin{proposition}\label{proposition2}\em
	Consider the average model of the DC-DC buck-boost converter with CPL \eqref{bucboo} satisfying Assumptions \ref{ass1} and \ref{ass2} in closed-loop with an adaptive version of the control \eqref{ubucboo} given as
\begequ
\lab{adacon}
u = \bar u(x,\hat D,k_1)
\endequ
where $\hat D(t)$ is an on-line estimate of $D$ generated with the I$\&$I estimator
	\begin{align}\lab{boest1}
	\hat D=& -\frac{1}{2}\gamma x_2^2+{ D}_I\\
	\dot{D}_I=&\gamma x_1x_2 (1-u)+ \frac{1}{2}\gamma^2x_2^2 -\gamma D_I \label{boest2}
	\end{align}
where $\gamma>0$ is a free gain. There exists $k_1^{\min}$ such that for all $k_1>k_1^{\min}$ the overall system has an {\em asymptotically stable} equilibrium at $(x,\hat D)=(x_\star,D)$ . Moreover, {\em for all} initial conditions of the closed-loop system and all $D_I(0)$, we have
    \begin{align}\label{estimatorerror}
       \tilde D(t)=e^{- \gamma t} \tilde D(0),
    \end{align}
where $\tilde D:=\hat D - D$ is the parameter estimation error.
\end{proposition}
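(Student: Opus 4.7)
The plan is to split the proposition into two logically distinct pieces: establishing the closed-form estimator error dynamics \eqref{estimatorerror} by direct computation, and then deducing asymptotic stability of the full state via a cascade argument built on top of Proposition~\ref{proposition1}.

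\textbf{Step 1 (Estimator error dynamics).} I would start by differentiating $\hat D$ from \eqref{boest1}, which gives $\dot{\hat D} = -\gamma x_2 \dot x_2 + \dot D_I$. Substituting the second equation of \eqref{bucboo} for $\dot x_2$ and the update law \eqref{boest2} for $\dot D_I$, the $\gamma x_1 x_2 (1-u)$ contributions cancel exactly, the $\gamma^2 x_2^2$ terms reorganise using $D_I = \hat D + \frac{1}{2}\gamma x_2^2$, and what remains is $\dot{\hat D} = -\gamma(\hat D - D)$. Since $D$ is constant this gives $\dot{\tilde D} = -\gamma \tilde D$, from which \eqref{estimatorerror} follows by integration. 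Crucially, the cancellation is algebraic and independent of $x(t)$ and of the control $u(t)$, so the exponential convergence holds for every initial condition and every $D_I(0)$, exactly as asserted.

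\textbf{Step 2 (Cascade stability).} Writing $\hat D = D + \tilde D$, the adaptive closed loop becomes a cascade in which the driving subsystem $\dot{\tilde D} = -\gamma \tilde D$ is globally exponentially stable and decoupled from $x$, while the driven subsystem reads $\dot x = f(x) + g(x)\bar u(x, D + \tilde D, k_1)$. When $\tilde D = 0$ this reduces to the nominal IDA-PBC closed loop of Proposition~\ref{proposition1}, for which $x_\star$ is AS with strict Lyapunov function $H_d(x)$ on the sublevel set $\Omega_x$. I would then use the composite candidate $V(x, \tilde D) := H_d(x) + \frac{\mu}{2}\tilde D^2$, with $\mu>0$ to be fixed later. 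Its derivative splits into (i) the negative-definite nominal piece $(\nabla H_d)^\top F_d \nabla H_d$, (ii) the decoupling piece $-\mu\gamma\tilde D^2$, and (iii) a cross term $(\nabla H_d)^\top g(x)[\bar u(x, \hat D, k_1) - \bar u(x, D, k_1)]$. Smoothness of $\bar u$ in its second argument on compact subsets of $\mathbb{R}^2_{>0}$ gives a Lipschitz-type bound on the cross term in $\tilde D$; Young's inequality, followed by a choice of $\mu$ and of $k_1 \ge k_1^{\min}$ large enough to make the negative-definite pieces dominate, yields $\dot V < 0$ on a sublevel set of $V$ containing $(x_\star, D)$ and establishes local asymptotic stability.

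\textbf{Main obstacle.} The delicate step is the cross-term bound on a sublevel set of $V$: the explicit expression for $\bar u$ in \eqref{ubucboo} contains rational and $\arctan$ terms whose partial derivative in $\hat D$ must be bounded uniformly on a neighbourhood of $(x_\star, D)$ that stays inside $\mathbb{R}^2_{>0}$, so that $x_2$ remains bounded away from zero and no singularities intrude. A cleaner but less quantitative route is to invoke the standard cascade result that a globally exponentially stable driving subsystem cascaded with a locally asymptotically stable driven one yields local asymptotic stability of the overall equilibrium, provided the interconnection is continuous---which holds here by smoothness of $\bar u$ in its $\hat D$ argument. Either path delivers the threshold $k_1^{\min}$ asserted in the proposition.
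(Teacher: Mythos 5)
Your proof is correct and, in its essentials, coincides with the paper's. Step 1 is verbatim the paper's computation: the $\gamma x_1x_2(1-u)$ contributions cancel exactly, leaving $\dot{\tilde D}=-\gamma\tilde D$ independently of $x$ and $u$, so \eqref{estimatorerror} holds for all initial conditions. For Step 2 the paper takes precisely the route you label as the ``cleaner but less quantitative'' one: it writes $\bar u(x,\hat D,k_1)=\bar u(x,D,k_1)+\delta(x,\tilde D,k_1)$ with $\delta(x,0,k_1)=0$, observes that the closed loop is the cascade $\dot x = F_d(x)\nabla H_d(x)+g(x)\delta(x,\tilde D,k_1)$, $\dot{\tilde D}=-\gamma\tilde D$, and invokes Proposition 4.1 of \citet{Sep97} (exponentially stable driver, locally asymptotically stable driven subsystem, smooth interconnection vanishing at $\tilde D=0$) to conclude local asymptotic stability, with $k_1^{\min}$ being exactly the bound \eqref{bouk1} inherited from Proposition \ref{proposition1}. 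Your primary route --- the composite Lyapunov function $V=H_d+\tfrac{\mu}{2}\tilde D^2$ with a Young's-inequality bound on the cross term --- is a valid, more self-contained alternative: it buys an explicit sublevel-set estimate of the region of attraction of the \emph{augmented} $(x,\tilde D)$ system, at the price of the uniform bound on $\partial\bar u/\partial\hat D$ over a compact set bounded away from $x_2=0$ that you correctly identify as the delicate point. One small imprecision: $k_1$ need not be enlarged beyond \eqref{bouk1} to dominate the interconnection term --- the cross term is $O(\|\nabla H_d\|\,|\tilde D|)$ and is absorbed by the choice of $\mu$ (or handled wholesale by the cascade theorem); enlarging $k_1$ also reshapes $\bar u$ and hence the cross term itself, so it is not the right knob for that purpose.
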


\begin{proof}
Differentiating $\tilde D$ along the trajectories of \eqref{bucboo} and using \eqref{boest1} one gets
\begin{align}
\dot {\tilde D}
&= -\gamma x_2\dot x_2 + \dot {D}_I\nonumber\\
&= -\gamma x_1x_2(1-u) + \gamma D + \dot {D}_I.\nonumber
\end{align}
Substituting \eqref{boest2} in the last equation
yields
\begin{align}
\dot {\tilde D}&= \gamma D + \frac{1}{2}\gamma^2x_2^2 -\gamma D_I\nonumber\\
&=-\gamma \tilde D,\nonumber
\end{align}
from which \eqref{estimatorerror} follows immediately.

To prove asymptotic stability of $(x,\hat D)=(x_\star,D)$ we  write the adaptive controller \eqref{adacon} as
$$
\bar u(x,\hat D,k_1) = \bar u(x, D,k_1) + \delta(x, \tilde D,k_1),
$$
where we define the mapping
$$
\delta(x,\tilde D,k_1) := \bar u(x,\tilde D + D, k_1)- \bar u(x,D,k_1),
$$
and we underscore the fact that $\delta(x,0,k_1)=0.$

Invoking the proof of Proposition \ref{proposition1} the closed-loop system is now a cascaded system of the form
\begequarrs
\dot x & = & F_d(x) \nabla H_d(x)+g(x)\delta(x,\tilde D,k_1) \\
\dot {\tilde D}&= &-\gamma \tilde D,
\endequarrs
where
\begequ
\lab{g}
g(x):=\left(
  \begin{array}{c}
    x_2+1 \\
    -x_1 \\
  \end{array}
\right),
\endequ
is the systems input matrix. Now, $\tilde D(t)$ tends to zero exponentially fast for all initial conditions, and for sufficiently large $k_1$, {\em i.e.}, such that \eqref{bouk1} is satisfied, the system above with $\tilde D=0$ is asymptotically stable. Invoking well-known results of asymptotic stability of cascaded systems, {\em e.g.}, Proposition 4.1 of \citep{Sep97}, completes the proof of (local) asymptotic stability.
\end{proof}

\section{Simulation Results}
\lab{section5}

In this section the performance of the proposed adaptive IDA-PBC is illustrated via some computer simulations. Moreover, its transient behavior is compared with the one of a PD controller designed adopting the classical linearization technique.

In all simulations, we have chosen the system parameters given in \citep{Kwa07}, namely, $P=61.25 W,\;C=500 \mu F, \;L=470\mu H,\;E=10 V, $ and fixed the desired equilibrium as $x_{\star}=(0.7423,4).$  For simplicity, we simulate the scaled system \eqref{bucboo}, for which we have $D=0.59384$. Depending on the context, the plots are shown either for $x$ or for $(i,v)$---that we recall are simply related by the scaling factors given in \eqref{chacoo}.

\subsection{PD controller}
To underscore the limitations of the PD controller and the difficulties related with its tuning we present now the local stability analysis of such a controller. From \eqref{bucboo}, we obtain the error dynamics
\begin{align*}
\dot e_1&=-(1-e_u-u_\star)(e_2+x_{2\star})+e_u+u_{\star}\\
\dot e_2&=(1-e_u-u_\star)(e_1+x_{1\star})-\frac{D}{e_2+x_{2\star}}
\end{align*}
where we define the errors
$$
e_1:=x_1-x_{1\star}, \;e_2:=x_2-x_{2\star},\; e_u:=u-u_\star,
$$
and $u_\star:=\frac{x_{2\star}}{1+x_{2\star}}$. A standard PD controller for the error dynamics is given by
\begin{align}\label{PDcontrol}
    e_u&=k_p e_1+k_d e_2,
\end{align}
where $k_p, k_d$ are tuning gains. Notice that, for the computation of $x_{1\star}$, the implementation of this controller requires the knowledge of $D$. The Jacobian matrix of the closed-loop system $\dot e = F(e)$, evaluated at the equilibrium point, is given by
\begin{align}
J:=\left(
               \begin{array}{cc}
                 \nabla_{e_1} F_1(e)  &  \nabla_{e_2} F_1(e)  \\
                 \nabla_{e_1} F_2(e)  &  \nabla_{e_2} F_2(e) \\
               \end{array}
             \right)\Bigg|_{e=0}
             =\left(
                \begin{array}{cc}
                  k_p(1+x_{2\star}) & k_d+k_dx_{2\star}-\frac{1}{1+x_{2\star}} \\
                  \frac{1}{1+x_{2\star}}-\frac{Dk_p(1+x_{2\star})}{x_{2\star}} & -\frac{D(-1+k_dx_{2\star}(1+x_{2\star}))}{x_{2\star}^2} \\
                \end{array}
              \right),\nonumber
\end{align}
where $\nabla_{e_i}F_j(e):={\partial F_j(e) \over \partial e_i}$. The matrix $J$ is Hurwitz if and only if its trace is negative and its determinant is positive, which are given by
\begin{align}
   \tr(J)&=k_p(1+x_{2\star})- k_d D({1 \over x_{2\star}}+1)+{D \over x_{2\star}^2}\nonumber\\
    \det(J)&=k_p\frac{D}{x_{2\star}^2}-k_d+\frac{1}{(x_{2\star}+1)^2}.\nonumber
\end{align}
Defining the positive constants
\begequarrs
m_1 &:=& {x_{2\star} \over D},\;b_1:={1 \over x_{2\star} + x_{2\star}^2}\\
m_2 &:=& {D \over x^2_{2\star}},\;b_2:={1 \over (1+ x_{2\star})^2},
\endequarrs
we can write the trace and determinant stability conditions as the two-sided inequality
\begequ
\lab{staine}
m_2 k_p + b_2> k_d  >  m_1 k_p + b_1,
\endequ
which is a conic section in the plane $k_d-k_p$, that reveals the conflicting role of the two gains. Notice that the extracted power $D$ enters in the first slope $m_1$ in the denominator, while it appears in the nominator in $m_2$---rendering harder the gain tuning task regarding this uncertain (and time-varying) parameter.

\subsection{IDA-PBC vs PD: Phase plots and transient response}
Since the system in closed-loop with the PD \eqref{PDcontrol} and  the (non-adaptive) IDA-PBC  \eqref{bucboo} lives in the plane it is possible to get the global picture of the behavior of these controllers drawing their phase plot.

\begin{figure}[H]
  \centering
  \includegraphics[scale=0.255]{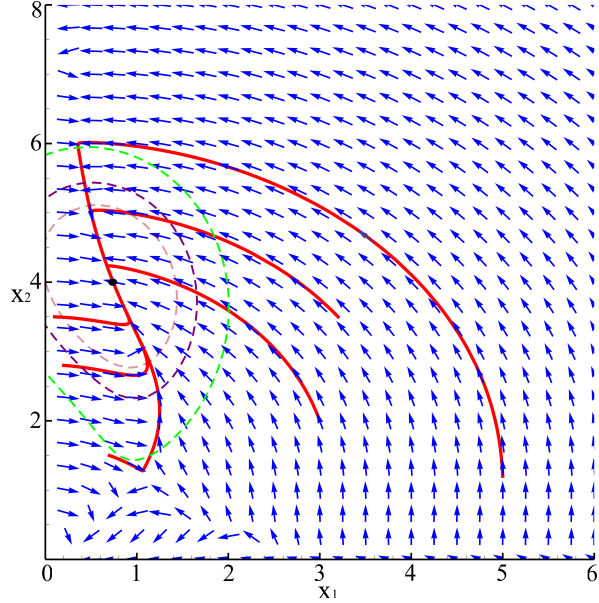}
  \caption{\normalsize Phase plot of the system with the IDA-PBC, three sublevel sets $\Omega_x$ and trajectories (red) for different initial conditions. }\label{buckboostvectorfield}
\end{figure}
In Fig. \ref{buckboostvectorfield} we show the phase plot of the IDA-PBC together with some trajectories for different initial conditions (in red), and three sublevel sets $\Omega_x$---defined in \eqref{region}---that ensure the trajectories remain in $\rea_{>0}^2$. The plot is shown for $k_1=0.01$, which satisfies \eqref{bouk1} since, for this case, $k_1'=-0.1205$ and $k_1''=-0.0583$. It is clearly seen that the state trajectories for initial conditions starting in $\Omega_x$ remain there and converge to the desired equilibrium point. Moreover, it is obvious from the phase portrait that the actual domain of attraction of this equilibrium---that ensures this key invariance property---is much larger than the one predicted by the theory. However, as shown in the plot, it cannot cover the whole positive orthant. Indeed, it is possible to show that the closed-loop vector field has another equilibrium in $\rea_{>0}^2$ that corresponds to a saddle point.  To better illustrate this fact we show in Fig. \ref{fzoo} a zoom of the phase plot around this unstable equilibrium.

In Fig. \ref{PDvectorfield} we show the phase plot for the PD controller \eqref{PDcontrol}, whose parameters are chosen as $k_p=-0.4, k_d=-1.5$ to satisfy the condition \eqref{staine}, which for the current situation takes the form
$$
0.037k_p + 0.04 > k_d  >  6.7358k_p + 0.0588.
$$
The figure shows two trajectories---that have to be chosen very close to the equilibrium---to ensure that the trajectories remain in $\rea_{>0}^2$ and converge to $x_\star$. Compared with Fig. \ref{buckboostvectorfield}, it is observed that the IDA-PBC provides a much bigger domain of attraction and, moreover, gives an estimate for it. Other values for the gains $k_p$ and $k_d$ are tried yielding similar inadmissible behavior.

\begin{figure}[H]
  \centering
  \includegraphics[scale=0.255]{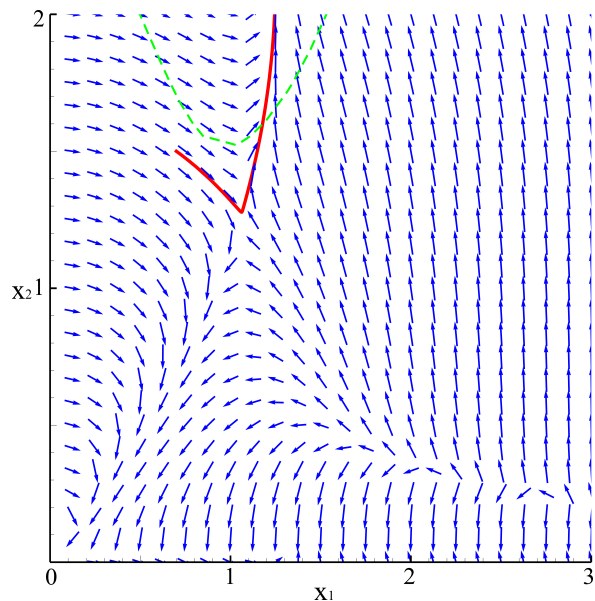}
  \caption{\normalsize Zoom of the phase plot of the system with the IDA-PBC around the saddle point.  }\label{fzoo}
\end{figure}
\begin{figure}[H]
  \centering
  \includegraphics[scale=0.255]{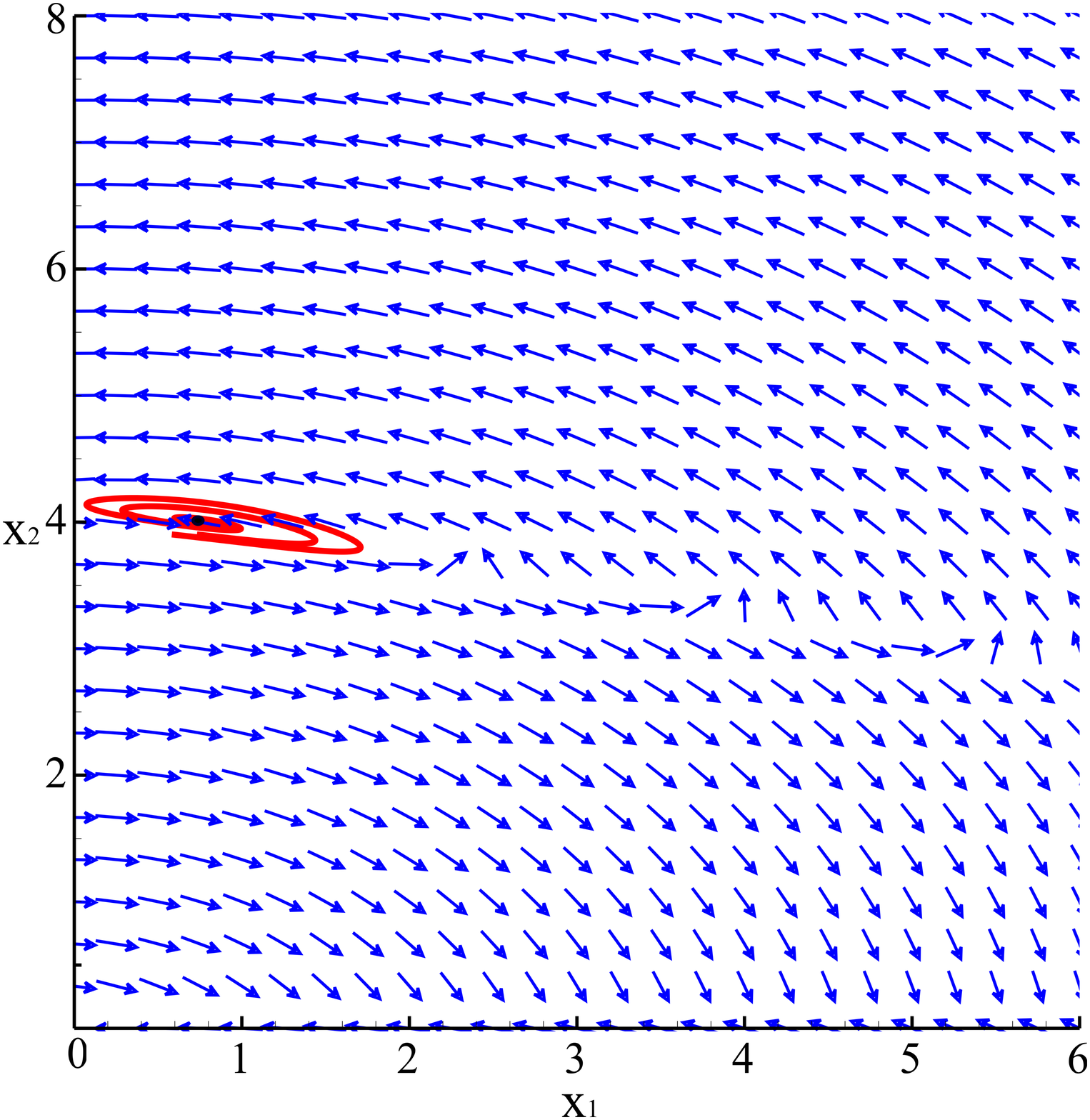}
  \caption{\normalsize Phase portrait of the system with the PD controller and trajectories (red) for two initial conditions $x(0) \approx x_\star$.  }\label{PDvectorfield}
\end{figure}

Finally, Figs. \ref{Pdx} and \ref{u1} show the transient responses of the output voltage $v$ and the duty ratio $u$ under the IDA-PBC and the PD controller with initial conditions $x(0)=(0.4,3.9)$ and gains $k_1=0.01$ for the former and $k_p=-0.4, k_d=-1.5$ for the latter. It is seen that the IDA-PBC has a faster transient performance with a smaller control signal.

\begin{figure}[H]
  \centering
  \includegraphics[scale=0.65]{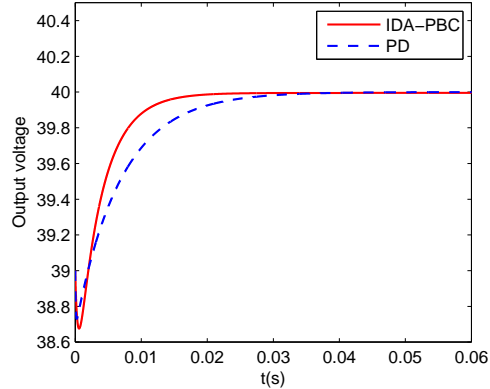}
  \caption{\normalsize Output voltage for the IDA-PBC and the PD controller.}\label{Pdx}
\end{figure}

\begin{figure}[H]
  \centering
  \includegraphics[scale=0.26]{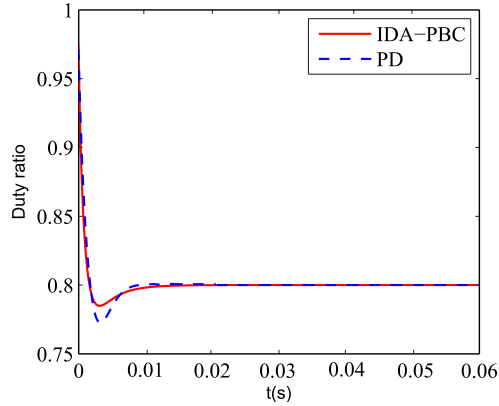}
  \caption{\normalsize Duty ratio for the IDA-PBC and the PD controller.}\label{u1}
\end{figure}

\subsection{Adaptive IDA-PBC with time-varying $D$}
Fig. \ref{controlgain} shows the profiles of output voltage and inductor current for the adaptive IDA-PBC---for different values of the control gain $k_1$ and adaptation gain $\gamma=1$---in the face of step changes in the extracted power $D$. It is seen that increasing the control gain $k_1$ reduces the convergence time of the output voltage.
As shown in the figure, the output voltage recovers very fast from the variations of the power $D$, always converging to the desired equilibrium. This is due to the fact that, as predicted by the theory, the power estimate converges---exponentially fast---to the true value independently of the control signal. It should be remarked that the PD controller becomes unstable in this scenario.

In Fig. \ref{estimator} we show the step changes in the power $D$ and the estimate $\hat D$ for different values of the adaptation gain, with the initial condition $\hat D(0)=D(0)$. As predicted by the theory, for a larger $\gamma$, the speed of convergence of the estimator is faster. Notice, however, that in the selection of $\gamma$, there is a tradeoff between convergence speed and noise sensitivity.

\begin{figure}[H]
  \centering
  \includegraphics[scale=0.65]{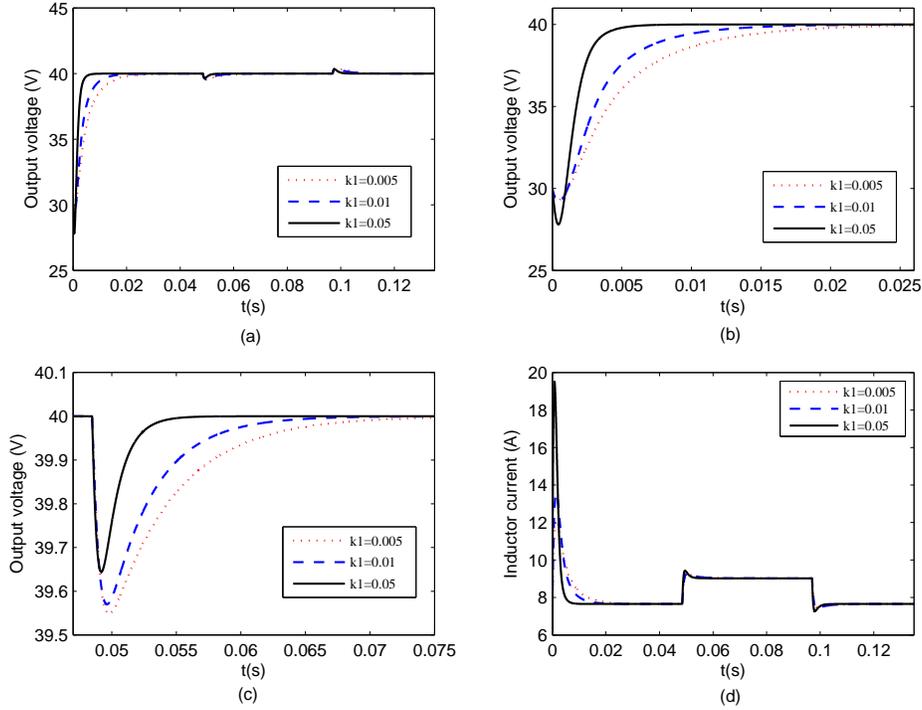}
  \caption{\normalsize Response curves for the adaptive IDA-PBC with $\gamma=1$ to changes in the power $D$: (a) output voltage---with (b) and (c) zooms for it---and (d) the inductor current.}\label{controlgain}
\end{figure}

\begin{figure}[H]
  \centering
\includegraphics[scale=0.65]{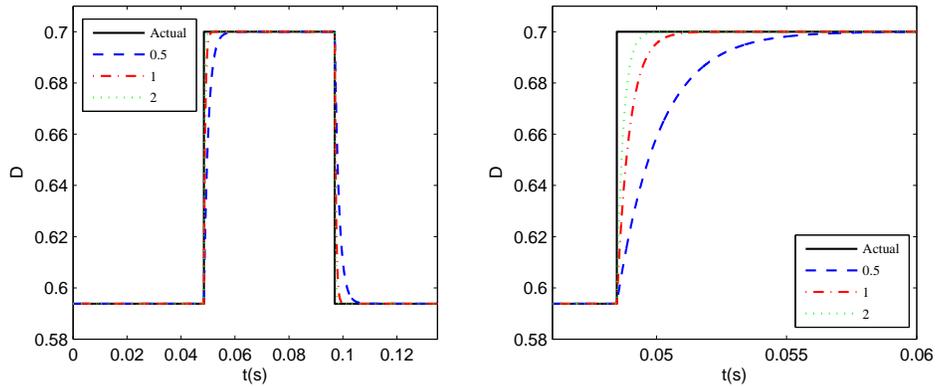}
  \caption{\normalsize Transient performance of the estimate $\hat D$ under step changes of the parameter $D$ for various adaptation gains $\gamma$ and a zoom of the first step.}\label{estimator}
\end{figure}

\section{Conclusions}\label{section6}

In this paper, we have addressed the challenging problem of regulation of the output voltage of a buck-boost converter feeding a CPL with unknown power. First, assuming the power is known, an IDA-PBC has been proposed. Subsequently, an on-line I$\&$I estimator with global convergence property has been presented to render the scheme adaptive, preserving the asymptotic stability property. We have also illustrated the performance limitations of the classical PD controller stemming from the fact that, due to the presence of the CPL,  the system is non-minimum phase. Some realistic simulations have been provided to confirm the effectiveness of the proposed method.

Although the  great complexity of  the exact expression of the controller stymies its practical application, well-established and effective methods of function approximation can be used to obtain a workable solution. Current research is under way in this direction with the final aim of reporting a practical implementation. Another line of research that we are currently pursuing is the addition of a current observer to remove the need for its measurement, which is an issue of practical interest.

\appendix

\section{Proof of Proposition \ref{proposition1}}

(P1) We will show that the control \eqref{ubucboo} can be derived using the IDA-PBC method of Proposition \ref{proposition0} with the selection
\begin{align}
F_d(x):=\left[
\begin{array}{cc}
 -\frac{x_2}{x_1} & -\frac{2x_2}{x_2+1} \\
 \frac{2x_2}{x_2+1} & -\frac{2x_1}{(x_2+1)^2}\\
 \end{array}
 \right],
\end{align}
that, for $x \in \rea_{>0}^2$, satisfies the condition \eqref{fd}.\footnote{It is well-known \citep{Ort04} that a key step for the successful application of the method is a suitable selection of this matrix, which is usually guided by the study of the solvability of the PDE \eqref{pde}. See \cite{Wei} for some guidelines for its selection in this example.}

The system \eqref{bucboo} can be rewritten in the form \eqref{dotxfg} with $g(x)$ given in \eqref{g} and the vector field
$$
f(x):=\left(
  \begin{array}{c}
    -x_2 \\
    x_1-\frac{D}{x_2} \\
  \end{array}
\right).
$$
Noting that the left annihilator of $g(x)$ is $g^\perp(x):=[x_1\;x_2+1]$, the PDE \eqref{pde} takes the form
\begin{align}
   \left[
     \begin{array}{cc}
       x_1 & x_2+1 \\
     \end{array}
   \right]
\left(\left[
    \begin{array}{c}
      -x_2 \\
      x_1-\frac{D}{x_2} \\
    \end{array}
  \right]
-\left[
   \begin{array}{cc}
 -\frac{x_2}{x_1} & -\frac{2x_2}{x_2+1} \\
 \frac{2x_2}{x_2+1} & -\frac{2x_1}{(x_2+1)^2}\\
 \end{array}
 \right]
\nabla H_d(x)
\right)=0,
\nonumber
\end{align}
which is equivalent to
\begin{align}\label{pde1}
   -x_2\nabla_{x_1} H_d(x)+2x_1\nabla_{x_2} H_d(x)=D - x_1+\frac{D}{x_2}.
\end{align}
The solution of the PDE \eqref{pde1} is easily obtained using a symbolic language, {\em e.g.}, Maple or Mathematica, and is of the form
$$
H_d(x)=-\frac{1}{2}\left(x_2+\sqrt{2}D\arctan{\left[\frac{\sqrt{2}x_1}{x_2}\right]}\right)
-\frac{D\arctan{\left[\frac{x_1}{\sqrt{x_1^2+\frac{x_2^2}{2}}}\right]}}{2\sqrt{x_1^2+\frac{x_2^2}{2}}}
+\Phi(x_1^2+\frac{x_2^2}{2}).
$$
where $\Phi(\cdot)$ is an arbitrary function. Selecting this free function as
$$
\Phi(z):={k_1 \over 2}(z+ k_2)^2,
$$
with $k_1$ and $k_2$ arbitrary constants, yields \eqref{solution}.

To complete the design it only remains to prove the existence of $k_1$ and $k_2$ that verify \eqref{mincon}. Towards this end, we first compute the gradient of $H_d(x)$ as
\begin{align}
\nabla H_d  &=\begin{bmatrix}
          -\frac{D(1+x_2)}{2x_1^2+x_2^2}+k_1x_1(2(k_2+x_1^2)+x_2^2)+\frac{\sqrt{2}Dx_1\arctan{\left[\frac{x_1}{\sqrt{x_1^2+\frac{x_2^2}{2}}}\right]}}{(2x_1^2+x_2^2)^{\frac{3}{2}}}\\
          \frac{\sqrt{2x_1^2+x_2^2}\Big(2Dx_1(1+x_2)+x_2(2x_1^2+x_2^2)\big(-1+2k_1(k_2+x_1^2)x_2+k_1x_2^3\big)\Big)+\sqrt{2}Dx_2^2\arctan{\left[\frac{x_1}{\sqrt{x_1^2+\frac{x_2^2}{2}}}\right]}}{2x_2(2x_1^2+x_2^2)^{\frac{3}{2}}}
          \end{bmatrix}\label{gradient0}
\end{align}
Evaluating \eqref{gradient0} at the equilibrium and selecting $k_2$ as given in \eqref{k2} yields
\begin{align}
\left.\nabla H_d\right|_{x=x_\star}=\begin{bmatrix}0\\
D+Dx_{2\star}-x_{1\star}x_{2\star}\end{bmatrix}.
\end{align}
Invoking \eqref{x1sta} one gets $\left.\nabla H_d\right|_{x=x_\star}=0$.

The Hessian of $H_d(x)$ is given by
\begin{align}\label{hessian}
\nabla^2 H_d=\begin{bmatrix} \nabla_{x_1}^2 H_d
&\nabla_{x_1x_2}^2 H_d \\
\nabla_{x_2x_1}^2 H_d
&\nabla_{x_2}^2 H_d\end{bmatrix},
\end{align}
where

\begin{align}
\nabla_{x_1}^2 H_d=&\frac{1}{(2x_1^2+x_2^2)^3}\bigg((2x_1^2+x_2^2)\big(2Dx_1(3+2x_2)+k_1(2x_1^2+x_2^2)^2(2k_2+6x_1^2+x_2^2)\big)\nonumber\\
&+\sqrt{2}D(-4x_1^2+x_2^2)\sqrt{2x_1^2+x_2^2}\arctan{\left[\frac{x_1}{\sqrt{x_1^2
+\frac{x_2^2}{2}}}\right]}\bigg)\nonumber\\
\nabla_{x_2x_1}^2 H_d=&\nabla_{x_1x_2}^2 H_d=\frac{1}{x_2(2x_1^2+x_2^2)^3}\bigg(2k_1x_1x_2^2(2x_1^2+x_2^2)^3+D\big(2x_1^2x_2^2-4x_1^4(1+x_2)\nonumber\\
&+x_2^4(2+x_2)\big)-3\sqrt{2}Dx_1x_2^2\sqrt{2x_1^2+x_2^2}\arctan{\frac{x_1}{\sqrt{x_1^2+\frac{x_2^2}{2}}}}\bigg)\nonumber\\
\nabla_{x_2}^2 H_d=& \frac{1}{2x_2^2(2x_1^2+x_2^2)^3}\bigg((2x_1^2+x_2^2)\Big(k_1 \big(2(k_2+x_1^2)+3x_2^2\big)(2x_1^2+x_2^3)^2-4Dx_1\big(x_1^2+\nonumber\\
&x_2^2(2+x_2)\big)\Big)+2\sqrt{2}D(x_1^2-x_2^2)x_2^2\sqrt{2x_1^2+x_2^2}\arctan{\left[\frac{x_1}{\sqrt{x_1^2+\frac{x_2^2}{2}}}\right]}\bigg).\nonumber
\end{align}
Replacing $k_2$ in \eqref{hessian} and evaluating it at the equilibrium point $x=x_\star$, we obtain
\begin{align}\label{hessian1}
\nabla^2 H_d\Bigg|_{x=x_\star}=\begin{bmatrix} \nabla_{x_1}^2 H_d|_{x=x_\star}
&\nabla_{x_1x_2}^2 H_d|_{x=x_\star}\\
\nabla_{x_2x_1}^2 H_d|_{x=x_\star}
&\nabla_{x_2}^2 H_d|_{x=x_\star}\end{bmatrix},
\end{align}
where
\begin{align}
\nabla_{x_1}^2 H_d|_{x=x_\star}=&\frac{1}{x_{1\star}(2x_{1\star}^2+x_{2\star}^2)^{\frac{5}{2}}}\Bigg(\sqrt{2x_{1\star}^2+x_{2\star}^2}\bigg(4k_1x_{1\star}^3(2x_{1\star}^2+x_{2\star}^2)^2
+D\big(x_{2\star}^2(1+\nonumber\\
&x_{2\star})+x_{1\star}^2(8+6x_{2\star})\big)\bigg)-6\sqrt{2}Dx_{1\star}^3\arctan{\frac{x_{1\star}}{\sqrt{x_{1\star}^2
+\frac{x_{2\star}^2}{2}}}}\Bigg)\nonumber\\
\nabla_{x_2x_1}^2 H_d|_{x=x_\star}=&\nabla_{x_1x_2}^2 H_d|_{x=x_\star}=\frac{1}{x_{2\star}(2x_{1\star}^2+x_{2\star}^2)^3}\Bigg(2k_1x_{1\star}x_{2\star}^2(2x_{1\star}^2+x_{2\star}^2)^3+D\big(-4x_{1\star}^4\nonumber\\
&2x_{1\star}^2x_{2\star}^2-4x_{1\star}^4x_{2\star}+x_{2\star}^4(2+x_{2\star})\big)-3\sqrt{2}Dx_{1\star}x_{2\star}^2\sqrt{2x_{1\star}^2+x_{2\star}^2}\times\nonumber\\
&\arctan{\frac{x_{1\star}}{\sqrt{x_{1\star}^2+\frac{x_{2\star}^2}{2}}}}\Bigg)\nonumber
\end{align}
\begin{align}
\nabla_{x_2}^2 H_d|_{x=x_\star}&= \frac{1}{2x_{1\star}x_{2\star}^2(2x_{1\star}^2+x_{2\star}^2)^{\frac{5}{2}}}\Bigg(\sqrt{2x_{1\star}^2
+x_{2\star}^2}\bigg(2k_1x_{1\star}x_{2\star}^4(2x_{1\star}^2+x_{2\star}^2)^2+D\big(x_{2\star}^4\nonumber\\
&-4x_{1\star}^4+x_{2\star}^5)-2x_{1\star}^2x_{2\star}^2(3+x_{2\star})\big)\bigg)-3\sqrt{2}Dx_{1\star}x_{2\star}^4\arctan{\frac{x_{1\star}}{\sqrt{x_{1\star}^2+\frac{x_{2\star}^2}{2}}}}\Bigg).\nonumber
\end{align}
Some lengthy, but straightforward, calculations prove that $\nabla_{x_1}^2 H_d|_{x=x_\star}>0$ holds if and only if
\begequ
\lab{k1pri}
k_1 > k_1',
\endequ
where $k_1'$ is defined as
\begin{align}
k_1'&:=\frac{\frac{6\sqrt{2}Dx_{1\star}\arctan{\frac{x_{1\star}}{\sqrt{x_{1\star}^2+\frac{1}{2}x_{2\star}^2}}}}{\sqrt{2x_{1\star}^2+x_{2\star}^2}}-D \big(x_{2\star}^2(1+x_{2\star})+ x_{1\star}^2(8+6x_{2\star})\big)}{4x_{1\star}^3(2x_{1\star}^2+x_{2\star}^2)^2}.
\lab{kpri}
\end{align}
Moreover, the determinant of \eqref{hessian1} is given by
\begin{align}
\det\left(\left.\nabla^2 H_d\right|_{x=x_\star}\right)=&\frac{1}{2x_{1\star}^2x_{2\star}^2(2x_{1\star}^2+x_{2\star}^2)^{4}}\Bigg(\Big(4k_1x_{1\star}^3(2x_1^2+x_2^2)^2
+D\big(x_{2\star}^2(1+x_{2\star})+\nonumber\\
&x_{1\star}^2(8+6x_{2\star})\big)-\frac{6\sqrt{2}Dx_{1\star}^3\arctan{\frac{x_{1\star}}{\sqrt{x_{1\star}^2
+\frac{x_{2\star}^2}{2}}}}}{\sqrt{2x_{1\star}^2+x_{2\star}^2}}\Big)
\Big(2k_1x_{1\star}x_{2\star}^4(2x_{1\star}^2+\nonumber\\
&x_{2\star}^2)^2-\frac{3\sqrt{2}Dx_{1\star}x_{2\star}^4\arctan{\frac{x_{1\star}}{\sqrt{x_{1\star}^2+\frac{x_{2\star}^2}{2}}}}}{\sqrt{2x_{1\star}^2
+x_{2\star}^2}}+D\big(x_{2\star}^4(1+x_{2\star})-4x_{1\star}^4-\nonumber\\
&2x_{1\star}^2x_{2\star}^2(3+x_{2\star})\big)\Big)\Bigg)-\frac{1}{x_{2\star}^2(2x_{1\star}^2+x_{2\star}^2)^{4}} \Bigg(2k_1x_{1\star}x_{2\star}^2(2x_{1\star}^2+x_{2\star}^2)^2+\nonumber\\
&D\big(x_{2\star}^2(2+x_{2\star})-x_{1\star}^2(2+2x_{2\star})\big)-\frac{3\sqrt{2}Dx_1x_2^2\arctan{\frac{x_1}{\sqrt{x_{1\star}^2
+\frac{x_{2\star}^2}{2}}}}}{\sqrt{2x_1^2+x_2^2}}\Bigg)^2\nonumber\\
=&\frac{1}{2x_{1\star}^2x_{2\star}^2(2x_{1\star}^2+x_{2\star}^2)^{4}}\Bigg(\bigg(4k_1x_{1\star}^3(2x_{1\star}^2+x_{2\star}^2)^2
+D\big(x_{2\star}^2(1+x_{2\star})+\nonumber\\
&x_{1\star}^2(8+6x_{2\star})\big)-\frac{6\sqrt{2}Dx_{1\star}^3\arctan{\frac{x_{1\star}}{\sqrt{x_{1\star}^2
+\frac{x_{2\star}^2}{2}}}}}{\sqrt{2x_{1\star}^2+x_{2\star}^2}}\bigg)
\bigg(2k_1x_{1\star}x_{2\star}^4(2x_{1\star}^2+\nonumber\\
&x_{2\star}^2)^2+D\big(-4x_{1\star}^4+x_{2\star}^4(1+x_{2\star})-2x_{1\star}^2x_{2\star}^2(3+x_{2\star})\big)-\nonumber\\
&\frac{3\sqrt{2}Dx_{1\star}x_{2\star}^4\arctan{\frac{x_{1\star}}{\sqrt{x_{1\star}^2+\frac{x_{2\star}^2}{2}}}}}{\sqrt{2x_{1\star}^2
+x_{2\star}^2}}\bigg)-2x_{1\star}^2\bigg(2k_1x_{1\star}x_{2\star}^2(2x_{1\star}^2+x_{2\star}^2)^2+\nonumber\\
&D\big(x_{2\star}^2(2+x_{2\star})-x_{1\star}^2(2+2x_{2\star})\big)-\frac{3\sqrt{2}Dx_1x_2^2\arctan{\frac{x_1}{\sqrt{x_{1\star}^2
+\frac{x_{2\star}^2}{2}}}}}{\sqrt{2x_{1\star}^2+x_{2\star}^2}}\bigg)^2\Bigg)\nonumber\\
=&\frac{1}{2x_{1\star}^2x_{2\star}^2(2x_{1\star}^2+x_{2\star}^2)^{4}}\Bigg(k_1D(2x_{1\star}^2+x_{2\star}^2)^2 h(x_{\star}) +2D^2 \big(x_{2\star}^2(1+x_{2\star})+\nonumber\\
&x_{1\star}^2(8+6x_{2\star})\big)\big(x_{2\star}^4(1+x_{2\star})-4x_{1\star}^4
-x_{1\star}^2x_{2\star}^2(3+x_{2\star})\big)-2x_{1\star}^2\big(x_{2\star}^2\nonumber\\
&(2+x_{2\star})-2x_{1\star}^2(2+2x_{2\star})\big)^2-3\sqrt{2}x_{1\star}x_{2\star}^4\big(x_{2\star}^2(1+x_{2\star})
+x_{1\star}^2(8+\nonumber\\
&6x_{2\star})\big)\frac{\arctan{\frac{x_{1\star}}{\sqrt{x_{1\star}^2+\frac{x_{2\star}^2}{2}}}}}{\sqrt{2x_{1\star}^2+x_{2\star}^2}}-6\sqrt{2}Dx_{1\star}^3\big(-4x_{1\star}^4
+x_{2\star}^4(1+x_{2\star})\nonumber\\
&-2x_{1\star}^2x_{2\star}^2(3+x_{2\star})\big)\frac{\arctan{\frac{x_{1\star}}{\sqrt{x_{1\star}^2+\frac{x_{2\star}^2}{2}}}}}{\sqrt{2x_{1\star}^2+x_{2\star}^2}}\Bigg)\nonumber
\end{align}
where $h(x_{\star})$ is defined as
\begequ
\lab{h}
h(x_\star):= 4x_{1\star}^3+4x_{1\star}^5x_{2\star}^3+2x_{1\star}^3x_{2\star}^4+x_{1\star}x_{2\star}^2+x_{1\star}x_{2\star}^7-8x_{1\star}^7-4x_{1\star}^5x_{2\star}^2.
\endequ
 Consequently, $\det\left(\left.\nabla^2 H\right|_{x=x_\star}\right)>0$ holds provided
\begin{align}
&k_1D(2x_{1\star}^2+x_{2\star}^2)^2 h(x_{\star}) +\nonumber\\
&2D^2\big(x_{2\star}^2(1+x_{2\star})+x_{1\star}^2(8+6x_{2\star})\big)\big(x_{2\star}^4(1+x_{2\star})-4x_{1\star}^4
-x_{1\star}^2x_{2\star}^2(3+x_{2\star})\big)-2\big(x_{2\star}^2(2+\nonumber\\
&x_{2\star})-2x_{1\star}^2(2+2x_{2\star})\big)^2-3\sqrt{2}x_{1\star}x_{2\star}^4\big(x_{2\star}^2(1+x_{2\star})+x_{1\star}^2(8+6x_{2\star})\big)
\frac{\arctan{\frac{x_{1\star}}{\sqrt{x_{1\star}^2+\frac{x_{2\star}^2}{2}}}}}{\sqrt{2x_{1\star}^2+x_{2\star}^2}}-\nonumber\\
&6\sqrt{2}Dx_{1\star}^3\big(-4x_{1\star}^4+x_{2\star}^4(1+x_{2\star})-2x_{1\star}^2x_{2\star}^2(3+x_{2\star})\big)\frac{\arctan{\frac{x_{1\star}}{\sqrt{x_{1\star}^2
+\frac{x_{2\star}^2}{2}}}}}{\sqrt{2x_{1\star}^2+x_{2\star}^2}}>0.\lab{keyine}
\end{align}
Our final task is to select $k_1$, which stands in the first left hand term above, to ensure \eqref{keyine} holds.

First, we notice that $h(x_\star)$ may be factored as
$$
h(x_\star)= 4x_{1\star}^3+2x_{1\star}^3x_{2\star}^4+x_{1\star}x_{2\star}^2+x_{1\star}x_{2\star}^7+4x_{1\star}^5[x_{2\star}^3 -(2x_{1\star}^2+x_{2\star}^2)],
$$
with the term in brackets being positive in most practical applications. Consequently, since all other terms are positive, we have that $h(x_{\star})>0$ and the inequality \eqref{keyine} is equivalent to $k_1>k_1''$, with the latter defined as
\begin{align}
k_1''&:= -\frac{1}{D(2x_{1\star}^2+x_{2\star}^2)^2 h(x_\star)} \times \Bigg(2D^2 \big(x_{2\star}^2(1+x_{2\star})+x_{1\star}^2(8+6x_{2\star})\big)
\big(-4x_{1\star}^4+x_{2\star}^4(1+\nonumber\\
&x_{2\star})-x_{1\star}^2x_{2\star}^2(3+x_{2\star})\big)-2x_{1\star}^2\big(x_{2\star}^2(2+x_{2\star})
-2x_{1\star}^2(2+2x_{2\star})\big)^2-3\sqrt{2}x_{1\star}x_{2\star}^4\big(x_{2\star}^2(1+\nonumber\\
&x_{2\star})
+x_{1\star}^2(8+6x_{2\star})\big)\frac{\arctan{\frac{x_{1\star}}{\sqrt{x_{1\star}^2+\frac{x_{2\star}^2}{2}}}}}{\sqrt{2x_{1\star}^2+x_{2\star}^2}}-6\sqrt{2}Dx_{1\star}^3\big(-4x_{1\star}^4
+x_{2\star}^4(1+x_{2\star})\nonumber\\
&-2x_{1\star}^2x_{2\star}^2(3+x_{2\star})\big)\frac{\arctan{\frac{x_{1\star}}{\sqrt{x_{1\star}^2
+\frac{x_{2\star}^2}{2}}}}}{\sqrt{2x_{1\star}^2+x_{2\star}^2}}\Bigg).\lab{kbipri}
\end{align}
Combining this constraint with \eqref{k1pri} allows us to conclude that $k_1> \max {\{k_1', k_1''\}}$ ensures $\nabla^2 H|_{x=x_\star}>0$.

The proof of P1 is completed showing that the IDA-PBC \eqref{ubucboo} results replacing the data in \eqref{uida}.

(P2) The proof of this claim follows immediately noting that we have shown above that the function $H_d(x)$ has a positive definite Hessian evaluated at $x_\star$, therefore it is {\em convex}. Consequently, for sufficiently small $c$, the sublevel set $\Omega_x$ defined in \eqref{region} is bounded and strictly contained in $\mathbb{R}^2_{>0}$. The proof is completed recalling  that sublevel sets of strict Lyapunov functions are inside the domain of attraction of the equilibrium.

\qed

\end{document}